\documentclass[11pt]{article}
\usepackage{amsmath, amssymb, amsfonts, amsthm, amscd, vmargin}
\usepackage[latin1]{inputenc}

\usepackage[all]{xy}
\usepackage{graphicx,color}
\usepackage{enumerate}

\usepackage{tikz}
\usetikzlibrary{shapes}
\usetikzlibrary{matrix,arrows,decorations.pathmorphing}

\setmarginsrb{2.4cm}{4.5cm}{2.4cm}{3.3cm}{0cm}{0cm}{0cm}{1.5cm}

\theoremstyle{plain}
\newtheorem{teo}{Theorem}
\newtheorem{lem}[teo]{Lemma}
\newtheorem{prop}[teo]{Proposition}
\newtheorem{cor}[teo]{Corollary}

\theoremstyle{definition}
\newtheorem{defi}{Definition}

\newtheorem{rem}{Remark}

\renewcommand{\Im}{\operatorname{Im}}

\newcommand{\Cbb}{{\mathbb C}}
\newcommand{\Qbb}{{\mathbb Q}}
\newcommand{\Zbb}{{\mathbb Z}}

\newcommand{\bigslant}[2]{{\raisebox{.4em}{$#1$}\left/\raisebox{-.8em}{$#2$}\right.}}

\begin{document}

\title{A Minimal Model Program for $\Qbb$-Gorenstein varieties}

\author{Boris Pasquier\protect\footnote{Boris PASQUIER, 
E-mail: boris.pasquier@univ-montp2.fr}}

\maketitle
\begin{abstract}
The main results of this paper are already known \cite{Shokurov}. Moreover, the non-$\mathbb{Q}$-factorial MMP was more recently considered by O~Fujino, in the case of toric varieties \cite{Fujino1}, for klt pairs \cite{Fujino2} and more generally for log-canonical pairs \cite{Fujino3}.

Here we rewrite the proofs of some of these results, by following the proofs given in \cite{KMM} of the same results in $\mathbb{Q}$-factorial MMP. And, in the family of $\mathbb{Q}$-Gorenstein spherical varieties, we answer positively to the questions of existence of flips and of finiteness of sequences of flips.

I apologize for the first version of this paper, which I wrote without knowing that these results already exist.
\end{abstract}

\textbf{Mathematics Subject Classification.} 14E30 14M25 14M17
\\

\textbf{Keywords.} Minimal Model Program.\\

\section{Introduction}

We only consider normal algebraic varieties over $\Cbb$.

In this paper we prove some already well-known results on $\Qbb$-Gorenstein MMP (also called non-$\mathbb{Q}$-factorial MMP).
The motivation comes from \cite{MMPhoro}, where the MMP for horospherical varieties is reduced to the study of a family of moment polytopes, $\Qbb$-Gorenstein MMP seams to be more natural than $\Qbb$-factorial MMP.

Then we answer positively to the questions of existence of flips and of finiteness of sequences of flips for spherical varieties.

Before to describe explicitly the results, we recall some basic definitions and notations.

\begin{defi}
\begin{itemize}
\item A normal variety $X$ is $\Qbb$-Gorenstein if its canonical divisor $K_X$ is $\Qbb$-Cartier (ie if there exists a multiple of $K_X$ that is Cartier).
\item A normal $\Qbb$-Gorenstein variety $X$ has terminal singularities if there exists a desingularization $\sigma:\,V\longrightarrow X$ of $X$ such that $$K_V=\sigma^*K_X+\sum_{E\,{\rm prime}\,{\rm divisor}\,{\rm of }\,V}a_EE,$$
with $a_E>0$ for any exceptional prime divisor $E$ of $\sigma$ (and $a_E=0$ otherwise).
\item A contraction (ie a projective morphism $\phi:\,X\longrightarrow Y$ such that $\phi_*\mathcal{O}_X=\mathcal{O}_Y$) is a small contraction if its exceptional locus is at least of codimension~2.
\end{itemize}
\end{defi}

Now, fix a normal $\Qbb$-Gorenstein projective variety $X$ with terminal singularities.
We denote by $NE(X)$ the nef cone of curves of $X$ and by $NE(X)_{K_X<0}$ (resp. $NE(X)_{K_X>0}$) the intersection of the nef cone with the open half-space of curves negative (resp. positive) along the divisor $K_X$. Then, by the Cone Theorem, $NE(X)=NE(X)_{K_X>0}+\sum_{i\in I}R_i$, where the set $\{R_i\,\mid\,i\in I\}$ is a discrete set of (extremal) rays of $NE(X)_{K_X<0}$. Fix a ray $R$ of $NE(X)_{K_X<0}$. Then, by the Contraction Theorem, there exists a unique contraction $\phi:\,X\longrightarrow Y$, such that, for any curve $C$ of $X$, $\phi(C)$ is a point if and only if the class of $C$ is in $R$.

We recall the definition of flips we use in this paper.
\begin{defi}\label{def:QGorflip}
Let $X$ be a normal $\Qbb$-Gorenstein projective variety, and $\phi:X\longrightarrow Y$ be a birational contraction of a ray of $NE(X)_{K_X<0}$, such that $Y$ is not $\Qbb$-Gorenstein.

A flip of $\phi$ is a small contraction $\phi:X^+\longrightarrow Y$, where 
\begin{itemize}
\item $X^+$ is a normal $\Qbb$-Gorenstein projective variety;
\item and for any contracted curve $C$ of $X^+$, $K_{X^+}\cdot C>0$.
\end{itemize} 
\end{defi}

In this paper, we prove the following (already known \cite{Shokurov}) result.

\begin{teo}\label{th:main} Let $X$ be a normal $\Qbb$-Gorenstein projective variety with terminal singularities. Let $R$ be a ray of $NE(X)_{K_X<0}$ and denote by $\phi:\,X\longrightarrow Y$ the contraction of $R$. Suppose that $\phi$ is birational.
\begin{itemize}
\item If $Y$ is $\Qbb$-Gorenstein, then $Y$ has terminal singularities and $\phi$ contracts a $\Qbb$-Cartier divisor.
\item If $Y$ is not $\Qbb$-Gorenstein, then there exists a flip of $\phi$ if and only if $\oplus_{l\geq 0}\phi_*\mathcal{O}_X(lmK_X)$ is a finitely generated sheaf of $\mathcal{O}_Y$-algebras. Moreover, if it exists, it is unique and $X^+$ has terminal singularities. 
\end{itemize}
\end{teo}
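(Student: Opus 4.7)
The plan is to follow the $\mathbb{Q}$-factorial template of \cite{KMM}, with the negativity lemma applied to $K_X$ (and later to $K_{X^+}$) as the main technical tool.

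\textbf{First bullet.} Fix $m$ so that $mK_X$ and $mK_Y$ are both Cartier, and write $K_X = \phi^*K_Y + D$ for a $\mathbb{Q}$-divisor $D$ supported on the exceptional locus of $\phi$. For any curve $C$ generating $R$, one has $D\cdot C = K_X\cdot C < 0$, so $-D$ is $\phi$-nef and $\phi$-exceptional; the negativity lemma gives $D = \sum a_E E$ with $a_E\geq 0$, and $D\neq 0$ (otherwise $K_X\cdot C = 0$), so $\phi$ is divisorial. The relative Picard rank $\rho(X/Y)=1$, coming from extremality of $R$, forces a unique $\phi$-exceptional prime $E$, hence $D = a_E E$ with $a_E>0$, and $E = (1/a_E)(K_X - \phi^*K_Y)$ is $\mathbb{Q}$-Cartier. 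For terminality of $Y$, compose a terminal resolution $\sigma\colon V\to X$ with $\phi$: every $(\phi\sigma)$-exceptional prime on $V$ is either $\sigma$-exceptional (with positive $\sigma$-discrepancy) or the strict transform of $E$ (with discrepancy $a_E>0$), so its discrepancy over $Y$ is strictly positive.

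\textbf{Second bullet, reduction and uniqueness.} Now assume $Y$ is not $\mathbb{Q}$-Gorenstein. By the contrapositive of the first bullet, $\phi$ must be small. Any putative flip $\phi^+\colon X^+\to Y$ is also small, so $X\dashrightarrow X^+$ is an isomorphism outside codimension two on both sides, and normality then gives
\[\phi^+_*\mathcal{O}_{X^+}(lmK_{X^+}) \;=\; \phi_*\mathcal{O}_X(lmK_X)\quad\text{for all } l\geq 0.\]
Because $K_{X^+}$ is $\phi^+$-ample, the left-hand algebra is finitely generated and $X^+ = \operatorname{Proj}_Y$ of it. This simultaneously proves necessity of finite generation and uniqueness of $X^+$. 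Conversely, assume $\mathcal{A} = \bigoplus_{l\geq 0}\phi_*\mathcal{O}_X(lmK_X)$ is finitely generated, and set $X^+ = \operatorname{Proj}_Y\mathcal{A}$ with structure map $\phi^+$. The tautological sheaf $\mathcal{O}_{X^+}(1)$ is $\phi^+$-ample and equals a positive multiple of $K_{X^+}$, so $X^+$ is $\mathbb{Q}$-Gorenstein with $K_{X^+}$ $\phi^+$-ample, and $X^+$ is normal because the pushforwards $\phi_*\mathcal{O}_X(lmK_X)$ are reflexive. Over the big open $U\subset Y$ where $\phi$ is an isomorphism, $\mathcal{A}|_U$ is generated in degree one by the line bundle $\mathcal{O}_U(mK_U)$, so $(\phi^+)^{-1}(U)\cong U$; hence $\phi^+$ is small.

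\textbf{Terminal singularities of $X^+$ (the main obstacle).} The hardest step is to show $X^+$ inherits terminal singularities. Pick a common resolution $p\colon V\to X$, $q\colon V\to X^+$ with $\phi p = \phi^+ q$, and write
\[K_V = p^*K_X + \sum a_F F = q^*K_{X^+} + \sum a'_F F.\]
Set $B = p^*K_X - q^*K_{X^+} = \sum (a'_F - a_F)F$. Its push-forward to $Y$ vanishes ($\phi$ and $\phi^+$ are both small, so $K_X$ and $K_{X^+}$ have the same push-forward as Weil divisors on $Y$), and so $B$ is $(\phi p)$-exceptional. On any $(\phi p)$-contracted curve $C$, $-p^*K_X\cdot C = -K_X\cdot p_*C\geq 0$ by $\phi$-antiampleness of $K_X$, and $q^*K_{X^+}\cdot C = K_{X^+}\cdot q_*C\geq 0$ by $\phi^+$-ampleness of $K_{X^+}$; hence $-B$ is $(\phi p)$-nef. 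The negativity lemma then yields $B\geq 0$, so $a'_F\geq a_F$ for every $F$. Finally, any $q$-exceptional prime $F$ is also $p$-exceptional (otherwise its image on $X$ would be a divisor collapsed under the codimension-one isomorphism $X\dashrightarrow X^+$), so $a_F>0$, giving $a'_F>0$ and proving that $X^+$ has terminal singularities.
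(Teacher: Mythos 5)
Your overall strategy is legitimate in outline: replacing the paper's sheaf-theoretic effectivity tool (Lemma~\ref{lem:key} and Corollary~\ref{cor:keyeff}) by the negativity lemma is a standard and correct alternative, and your common-resolution comparison $a'_F\geq a_F$ is exactly the paper's Corollary~\ref{cor:QGorFlip}. The problem is that the proposal silently imports two pieces of $\Qbb$-factorial behaviour that fail in the $\Qbb$-Gorenstein setting, and these are precisely the points the paper has to work around. First, in the first bullet you claim that $\rho(X/Y)=1$ forces a unique $\phi$-exceptional prime. Without $\Qbb$-factoriality the individual primes $E_i$ are not $\Qbb$-Cartier, so they have no classes in $N^1(X/Y)$ to test against $R$, and the exceptional divisor of an extremal contraction can be reducible; the paper explicitly writes $E=\sum_{i\in I}a_iE_i$ and calls it a ``not necessarily irreducible'' divisor. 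Your negativity-lemma argument then only yields $a_i\geq 0$ for all $i$ and $a_{i_0}>0$ for \emph{some} $i_0$, whereas terminality of $Y$ needs \emph{every} $a_i>0$; this is what the paper's curve-selection argument (a contracted curve inside $G_{i_0}$ avoiding $\mathrm{Exc}(\sigma)$ and the other $G_i$) supplies and your proof omits.

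Second, and more seriously, in the second bullet you assert that ``by the contrapositive of the first bullet, $\phi$ must be small.'' That is not the contrapositive: the first bullet gives ($\phi$ small $\Rightarrow$ $Y$ not $\Qbb$-Gorenstein), not the converse. In the non-$\Qbb$-factorial world a contraction with non-$\Qbb$-Gorenstein target can perfectly well contract a divisor --- the paper points to \cite{MMPhoro} for a flip of exactly such a contraction. This error propagates through the rest of your argument: (a) the identity $\phi_*\mathcal{O}_X(lmK_X)=\mathcal{O}_Y(lmK_Y)$ is then no longer a codimension-two triviality but requires Lemma~\ref{lem:key}, i.e.\ the $\phi$-ampleness of $-K_X$; (b) your proof that $\phi^+$ is small does not work even granting everything else: being an isomorphism over a big open subset of $Y$ is also true of divisorial contractions, so it cannot imply smallness --- the paper needs the whole $\mathrm{Coker}$/exact-sequence argument to exclude $\phi^+$-exceptional divisors, and the identification $\mathcal{O}_{X^+}(1)\cong\mathcal{O}_{X^+}(mK_{X^+})$, which you assert beforehand, is only deduced there \emph{after} smallness is known; (c) in the terminality step, a $q$-exceptional prime $F$ need not be $p$-exceptional: the strict transform on $V$ of a $\phi$-exceptional divisor of $X$ is contracted by $q$ (because $\phi^+$ is small) but not by $p$, and for such $F$ one has $a_F=0$, so your inequality $a'_F\geq a_F$ gives only $a'_F\geq 0$. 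Strict positivity for exactly these divisors is the content of the last paragraph of the paper's proof of Theorem~\ref{th:QGorFlip} (again a curve argument), and without it terminality of $X^+$ is not established.
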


The proof (given in this paper) of Theorem~\ref{th:main} is inspired by the proofs of the same results in the original MMP \cite{KMM}, which are also detailed in \cite{matsuki}.\\

The paper is organized as follows.

In Section~\ref{section1}, we list several probably well-known and general results, which are very useful in the rest of the paper.

Sections~\ref{section2} and~\ref{section3} are devoted to the proof of Theorem~\ref{th:main}.

In Section~\ref{section4}, we illustrate the MMP for $\Qbb$-Gorenstein projective varieties in an example of a 3-dimensional toric variety. In particular, we observe that a flip is not necessarily a contraction of an extremal ray.

In Section~\ref{section5}, we explain how to run a $\Qbb$-Gorenstein log MMP.

And in Section~\ref{section6} we answer positively to the questions of existence of flips and of finiteness of sequences of flips.

\section{General lemmas}\label{section1}
	
We begin by a very classical result.
	
\begin{lem}\label{lem:eff}
Let $X$ be a normal variety, and let $D$ be a Cartier divisor of $X$.
Then the following assertions are equivalent.
\begin{enumerate}
\item $D$ is effective.
\item $\mathcal{O}_X\subset \mathcal{O}_X(D)$.
\item $\mathcal{O}_X(-D)\subset \mathcal{O}_X$.
\end{enumerate}
\end{lem}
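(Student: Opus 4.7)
The plan is to treat this as an exercise in unwinding definitions and using that $\mathcal{O}_X(D)$ is invertible. All three sheaves will be viewed as subsheaves of the constant sheaf of rational functions $\mathcal{K}_X$, and the key remark is that $\mathcal{O}_X(D)$ is, by definition, the subsheaf whose sections on an open $U$ are those $f\in\mathcal{K}_X(U)$ with $(f)+D\geq 0$ on $U$ (together with $0$).

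First I would establish (2)$\Leftrightarrow$(3). Since $D$ is Cartier, $\mathcal{O}_X(D)$ is an invertible $\mathcal{O}_X$-module and $\mathcal{O}_X(D)\otimes\mathcal{O}_X(-D)=\mathcal{O}_X$. Tensoring the inclusion $\mathcal{O}_X\subset\mathcal{O}_X(D)$ by the locally free, hence flat, sheaf $\mathcal{O}_X(-D)$ gives $\mathcal{O}_X(-D)\subset\mathcal{O}_X$, and tensoring by $\mathcal{O}_X(D)$ reverses the operation. So (2) and (3) are equivalent.

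Next I would establish (1)$\Leftrightarrow$(2). Since $\mathcal{O}_X$ is the subsheaf of $\mathcal{K}_X$ generated by the constant section $1$, the inclusion $\mathcal{O}_X\subset\mathcal{O}_X(D)$ holds if and only if $1$ is a section of $\mathcal{O}_X(D)$ over $X$. Unwinding the definition, this means $(1)+D=D\geq 0$, i.e.\ $D$ is effective. Conversely, if $D$ is effective, then $1$ satisfies the defining condition of $\mathcal{O}_X(D)$, and one reads off the inclusion $\mathcal{O}_X\subset\mathcal{O}_X(D)$.

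I do not expect any genuine obstacle here; the only subtlety worth being careful about is keeping track of the fact that normality is what ensures that $\mathcal{O}_X(D)$ is well defined as a subsheaf of $\mathcal{K}_X$ via the condition $(f)+D\geq 0$, and that being Cartier is what makes the tensoring argument in the (2)$\Leftrightarrow$(3) step valid without having to pass to reflexive hulls.
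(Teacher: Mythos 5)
Your proof is correct. The paper itself gives no proof of this lemma (it is introduced as ``a very classical result'' and used without further comment), so there is no argument of the paper to compare against; your two steps --- identifying the inclusion $\mathcal{O}_X\subset \mathcal{O}_X(D)$ of subsheaves of $\mathcal{K}$ with the condition that $1$ be a global section of $\mathcal{O}_X(D)$, and passing between (2) and (3) by multiplying by the invertible sheaf $\mathcal{O}_X(\pm D)$ --- are exactly the standard ones, and they are consistent with how the paper manipulates these sheaves as subsheaves of the constant sheaf $\mathcal{K}$ in Lemma~\ref{lem:key} and Corollary~\ref{cor:keyeff}.
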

	
%
%

From Lemma~\ref{lem:eff}, we deduce the following, probably well-known, result.

\begin{lem}\label{lem:pulleff}
Let $\phi:\,X\longrightarrow Y$ be a morphism between two normal varieties $X$ and $Y$.
Let $D$ be an effective $\Qbb$-Cartier divisor of $Y$.

Then $\phi^*D$ is an effective $\Qbb$-Cartier divisor of $X$.
\end{lem}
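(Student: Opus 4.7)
The plan is to reduce to the case where $D$ is Cartier and then apply Lemma~\ref{lem:eff} twice, once on $Y$ and once on $X$. First I would choose an integer $m>0$ such that $mD$ is Cartier; since $D$ is effective, $mD$ is an effective Cartier divisor on $Y$, and it suffices to prove that $\phi^*(mD)$ is an effective Cartier divisor on $X$, because then $\phi^*D=\frac{1}{m}\phi^*(mD)$ is effective and $\Qbb$-Cartier, as required.

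Next I would use the Cartier case of Lemma~\ref{lem:eff} on $Y$: the effectivity of $mD$ is equivalent to the inclusion of sheaves $\Ocal_Y\hookrightarrow\Ocal_Y(mD)$. I would view this inclusion as a global section $s$ of the invertible sheaf $\Ocal_Y(mD)$, namely the one corresponding to $1\in K(Y)$, which lies in $\Gamma(Y,\Ocal_Y(mD))$ precisely because $mD\geq 0$, and whose zero divisor is $mD$ itself. Pulling back, one obtains a section $\phi^*s$ of $\phi^*\Ocal_Y(mD)=\Ocal_X(\phi^*(mD))$ whose zero divisor is by definition $\phi^*(mD)$. As soon as $\phi^*s$ is nonzero, it yields an inclusion $\Ocal_X\hookrightarrow\Ocal_X(\phi^*(mD))$, and a second application of Lemma~\ref{lem:eff}, now on $X$, delivers the effectivity of $\phi^*(mD)$.

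The main, and really only, obstacle is the nonvanishing of $\phi^*s$, which amounts to the requirement that $\phi(X)$ not be entirely contained in $\operatorname{Supp}(mD)=\operatorname{Supp}(D)$. This is exactly the standard hypothesis under which the pullback of a Cartier divisor by $\phi$ is defined in the first place; in every application of this lemma in the present paper the morphism $\phi$ will be dominant (indeed a birational contraction in the later sections), so no component of $X$ maps into $\operatorname{Supp}(D)$ and the argument above goes through without further assumption.
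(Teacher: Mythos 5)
Your proof is correct and follows essentially the same route as the paper: clear denominators to reduce to the Cartier divisor $mD$, apply Lemma~\ref{lem:eff} on $Y$, pull back the resulting inclusion $\mathcal{O}_Y\subset\mathcal{O}_Y(mD)$ (which is exactly your section $s$ corresponding to $1$), and apply Lemma~\ref{lem:eff} again on $X$. The only differences are cosmetic --- the paper phrases the pullback step as an inclusion of subsheaves of the constant sheaf $\mathcal{K}=\Cbb(X)$ rather than as the pullback of a global section --- and your explicit remark that no component of $X$ may map into $\operatorname{Supp}(D)$ makes precise a nondegeneracy hypothesis (needed for $\phi^*D$ to be defined at all) that the paper leaves implicit.
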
	

\begin{proof}
Let $m$ be a positive integer such that $mD$ is Cartier. Since $mD$ is effective, we have $\mathcal{O}_Y\subset \mathcal{O}_Y(mD)$. Then the image of $\phi^*(\mathcal{O}_Y)$ in $\phi^*(\mathcal{O}_Y(mD))$, is the pull-back by $\phi$ of the image of $\mathcal{O}_Y$ in $\mathcal{O}_Y(mD)$. But all these invertible sheaves are subsheaves of the constant sheaf $\mathcal{K}=\Cbb(X)$. Thus, $\phi^*(\mathcal{O}_Y)=\mathcal{O}_X$ is contained in $\phi^*(\mathcal{O}_Y(mD))$. Then $\phi^*(mD)$  is effective and also is $\phi^*(D):=\frac{1}{m}\phi^*(mD)$.
\end{proof}

We know prove the key lemma of the paper.

\begin{lem}\label{lem:key}
Let $\phi:\,X\longrightarrow Y$ be a surjective birational morphism between two normal varieties $X$ and $Y$.
Let $D$ be a Cartier divisor of $X$ and denote by $D_Y$ the Weil divisor $\phi_*D$.

Suppose that the morphism $\phi^*\phi_*\mathcal{O}_X(-D)\longrightarrow\mathcal{O}_X(-D)$ is surjective.

Then the image of $\phi^*\mathcal{O}_Y(D_Y)\longrightarrow\mathcal{K}$ is contained in $\mathcal{O}_X(D)$.

In particular, $\phi_*\mathcal{O}_X(D)=\mathcal{O}_Y(D_Y)$.
\end{lem}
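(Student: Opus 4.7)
The plan is to set up a multiplication pairing in the common function field $\mathcal{K} = \Cbb(X) = \Cbb(Y)$ (identified via the birational $\phi$), and to use the hypothesis to upgrade a trivial containment on $Y$ to one on $X$. Throughout, I view every divisorial sheaf as a sub-$\mathcal{O}$-module of the constant sheaf $\mathcal{K}$.

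First I would record the easy inclusion $\phi_*\mathcal{O}_X(-D) \subset \mathcal{O}_Y(-D_Y)$: a section $g$ of the left-hand side on $U$ satisfies $\operatorname{div}_X(g) - D \geq 0$ on $\phi^{-1}(U)$, and pushing this effective divisor forward — which kills $\phi$-exceptional components and replaces $\operatorname{div}_X(g)$ and $D$ by $\operatorname{div}_Y(g)$ and $D_Y$ — yields $\operatorname{div}_Y(g) - D_Y \geq 0$ on $U$. The tautological multiplication pairing $\mathcal{O}_Y(D_Y) \otimes \mathcal{O}_Y(-D_Y) \to \mathcal{O}_Y$ then restricts to a map $\mathcal{O}_Y(D_Y) \otimes \phi_*\mathcal{O}_X(-D) \to \mathcal{O}_Y$. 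Applying $\phi^*$ (which commutes with $\otimes$ and sends $\mathcal{O}_Y$ to $\mathcal{O}_X$) produces
\[
\phi^*\mathcal{O}_Y(D_Y) \otimes_{\mathcal{O}_X} \phi^*\phi_*\mathcal{O}_X(-D) \longrightarrow \mathcal{O}_X,
\]
which is still just multiplication of rational functions in $\mathcal{K}$.

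Now the hypothesis enters: since $\phi^*\phi_*\mathcal{O}_X(-D) \twoheadrightarrow \mathcal{O}_X(-D)$, the image of $\phi^*\phi_*\mathcal{O}_X(-D)$ in $\mathcal{K}$ is exactly $\mathcal{O}_X(-D)$. Writing $A \subset \mathcal{K}$ for the image of $\phi^*\mathcal{O}_Y(D_Y)$, the pairing becomes the inclusion $A \cdot \mathcal{O}_X(-D) \subset \mathcal{O}_X$ of sub-$\mathcal{O}_X$-modules of $\mathcal{K}$. Because $D$ is Cartier, $\mathcal{O}_X(-D)$ is invertible: locally at $x$ it equals $\mathcal{O}_{X,x}\cdot h$ for some $h \in \mathcal{K}^\times$, and $\mathcal{O}_X(D)_x = \mathcal{O}_{X,x}\cdot h^{-1}$. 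Thus $A_x \cdot h \subset \mathcal{O}_{X,x}$ forces $A_x \subset h^{-1}\mathcal{O}_{X,x} = \mathcal{O}_X(D)_x$, establishing $A \subset \mathcal{O}_X(D)$, i.e., the first assertion.

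For the \emph{in particular} statement, the inclusion $\phi_*\mathcal{O}_X(D) \subset \mathcal{O}_Y(D_Y)$ is the same pushforward argument applied to $D$ in place of $-D$; the reverse inclusion is the assertion just proved read on sections, since any $f \in \mathcal{O}_Y(D_Y)(U)$ belongs to $A_x \subset \mathcal{O}_X(D)_x$ for every $x \in \phi^{-1}(U)$, hence defines a section of $\phi_*\mathcal{O}_X(D)(U)$. The main obstacle I expect is the ``inversion'' step deducing $A \subset \mathcal{O}_X(D)$ from $A \cdot \mathcal{O}_X(-D) \subset \mathcal{O}_X$: this is precisely where the Cartierness of $D$ (rather than mere $\Qbb$-Cartierness) is essential, and it is also where the surjectivity hypothesis does real work, since it controls the behavior of $f$ along $\phi$-exceptional divisors, along which the naive valuation pushforward carries no information.
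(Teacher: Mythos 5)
Your proof is correct and follows essentially the same route as the paper's: both start from the inclusion $\phi_*\mathcal{O}_X(-D)\subset\mathcal{O}_Y(-D_Y)$, form the multiplication pairing $\mathcal{O}_Y(D_Y)\otimes\phi_*\mathcal{O}_X(-D)\to\mathcal{O}_Y$, pull it back along $\phi$, use the surjectivity hypothesis to replace $\phi^*\phi_*\mathcal{O}_X(-D)$ by $\mathcal{O}_X(-D)$, and then untwist by the invertible sheaf $\mathcal{O}_X(D)$ --- exactly your ``inversion'' step, which is indeed where Cartierness of $D$ is used. The only (harmless) deviation is in the final equality $\phi_*\mathcal{O}_X(D)=\mathcal{O}_Y(D_Y)$, where you argue directly on sections pulled back through $\phi$, whereas the paper invokes $\phi_*\phi^*\mathcal{F}=\mathcal{F}$ (via $\phi_*\mathcal{O}_X=\mathcal{O}_Y$); your version even avoids that extra appeal.
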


Note that $D_Y$ is not necessarily Cartier, and $\mathcal{O}_Y(D_Y)$ is the (not necessarily invertible) subsheaf of the constant sheaf $\mathcal{K}=\Cbb(X)=\Cbb(Y)$ defined by, for any open set $\mathcal{U}$ of $Y$, $$\mathcal{O}_Y(D_Y)(\mathcal{U})=\{f\in\Cbb(Y)\,\mid\,{\rm div}\,f_{|\mathcal{U}}+D_{Y|\mathcal{U}}\geq 0\}.$$

\begin{proof}
Consider the inclusion of sheaves of $\mathcal{O}_Y$-modules $\phi_*\mathcal{O}_X(-D)\subset\mathcal{O}_Y(-D_Y)$. Tensoring by $\mathcal{O}_Y(D_Y)$, and composing by the natural morphism $\mathcal{O}_Y(D_Y)\otimes_{\mathcal{O}_Y}\mathcal{O}_Y(-D_Y)\longrightarrow\mathcal{O}_Y$, we get the natural morphism of sheaves of $\mathcal{O}_Y$-modules:
$$\mathcal{O}_Y(D_Y)\otimes_{\mathcal{O}_Y}\phi_*\mathcal{O}_X(-D)\longrightarrow\mathcal{O}_Y.$$

The pull-back of this morphism $$\phi^*\mathcal{O}_Y(D_Y)\otimes_{\mathcal{O}_X}\phi^*\phi_*\mathcal{O}_X(-D)\longrightarrow\mathcal{O}_X$$ factors through the following surjective morphism $$\phi^*\mathcal{O}_Y(D_Y)\otimes_{\mathcal{O}_X}\phi^*\phi_*\mathcal{O}_X(-D)\longrightarrow\phi^*\mathcal{O}_Y(D_Y)\otimes_{\mathcal{O}_X}\mathcal{O}_X(-D).$$

Then the image of $\phi^*\mathcal{O}_Y(D_Y)\otimes_{\mathcal{O}_X}\mathcal{O}_X(-D)$ in $\mathcal{K}$ is contained in $\mathcal{O}_X$. We tensor by the invertible sheaf $\mathcal{O}_X(D)$, and we get that $\phi^*\mathcal{O}_Y(D_Y)$ maps to $\mathcal{O}_X(D)$.

To prove the last statement we use that, since $\phi$ is surjective and $\phi_*\mathcal{O}_X=\mathcal{O}_Y$, for any sheaf of $\mathcal{O}_Y$-modules $\mathcal{F}$, we have $\phi_*\phi^*\mathcal{F}=\mathcal{F}$.
Hence, $\mathcal{O}_Y(D_Y)=\phi_*\phi^*\mathcal{O}_Y(D_Y)$ maps to $\phi_*\mathcal{O}_X(D)$. Both are subsheaves of $\mathcal{K}$, so we deduce that this map is an inclusion. The other inclusion is obvious, so that $\phi_*\mathcal{O}_X(D)=\mathcal{O}_Y(D_Y)$.
\end{proof}

From these lemmas, we get a useful corollary.

\begin{cor}\label{cor:keyeff}
Let $\phi:\,X\longrightarrow Y$ be a surjective birational morphism between two normal varieties $X$ and $Y$.
Suppose that there exists a positive integer $m$ such that $mK_X$ and $mK_Y$ are Cartier and such that the morphism $\phi^*\phi_*\mathcal{O}_X(-mK_X)\longrightarrow\mathcal{O}_X(-mK_X)$ is surjective.

Then $K_X-\phi^*K_Y$ is effective.
\end{cor}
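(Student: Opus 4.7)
The plan is to invoke Lemma~\ref{lem:key} directly with the Cartier divisor $D=mK_X$, convert the resulting sheaf-theoretic containment into an ordinary divisor inequality via Lemma~\ref{lem:eff}, and finally divide by $m$ to get a $\Qbb$-divisor statement.

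First I would identify the Weil divisor $D_Y$ of Lemma~\ref{lem:key} in our situation. Since $\phi$ is birational, it is an isomorphism over a big open subset $U\subset Y$, so $\phi_*(mK_X)$ agrees with $mK_Y$ on $U$; as Weil divisors on the normal variety $Y$ are determined by their restriction to a big open, this forces $D_Y=\phi_*(mK_X)=mK_Y$. The surjectivity hypothesis $\phi^*\phi_*\mathcal{O}_X(-mK_X)\twoheadrightarrow\mathcal{O}_X(-mK_X)$ is exactly the hypothesis of Lemma~\ref{lem:key} with $D=mK_X$, so the lemma yields that the image of $\phi^*\mathcal{O}_Y(mK_Y)\longrightarrow\mathcal{K}$ is contained in $\mathcal{O}_X(mK_X)$. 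Because $mK_Y$ is Cartier by assumption, $\phi^*\mathcal{O}_Y(mK_Y)$ is the invertible sheaf $\mathcal{O}_X(m\phi^*K_Y)$, and the map to $\mathcal{K}$ is injective, so we obtain the honest inclusion of subsheaves of $\mathcal{K}$:
$$\mathcal{O}_X(m\phi^*K_Y)\subset \mathcal{O}_X(mK_X).$$

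Tensoring this inclusion with the invertible sheaf $\mathcal{O}_X(-m\phi^*K_Y)$ gives $\mathcal{O}_X\subset \mathcal{O}_X\bigl(m(K_X-\phi^*K_Y)\bigr)$. By Lemma~\ref{lem:eff} applied to the Cartier divisor $m(K_X-\phi^*K_Y)$, this containment is equivalent to the effectivity of $m(K_X-\phi^*K_Y)$, whence $K_X-\phi^*K_Y$ is an effective $\Qbb$-divisor, as required.

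The only genuinely delicate point is the identification $\phi_*(mK_X)=mK_Y$, but this is the standard fact that under a birational morphism of normal varieties the canonical class pushes forward to the canonical class (since $K_X$ and $K_Y$ can only differ by $\phi$-exceptional components, which vanish under $\phi_*$); no further verification is needed. Everything else is a formal manipulation of the two preceding lemmas.
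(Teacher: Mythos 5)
Your proof is correct and follows essentially the same route as the paper: apply Lemma~\ref{lem:key} with $D=mK_X$ and $D_Y=mK_Y$, use the Cartier hypothesis on $mK_Y$ to upgrade the conclusion to an inclusion of invertible subsheaves of $\mathcal{K}$, tensor by $\phi^*\mathcal{O}_Y(-mK_Y)$, and conclude with Lemma~\ref{lem:eff}. The only difference is that you explicitly justify the identification $\phi_*(mK_X)=mK_Y$, which the paper simply asserts in passing; that is a harmless (and welcome) addition, not a divergence.
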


\begin{proof} We apply Lemma~\ref{lem:key} to $D=mK_X$ (and $D_Y=mK_Y$). 
Since $mK_Y$ is Cartier, then $\phi^*\mathcal{O}_Y(mK_Y)$ is a subsheaf of $\mathcal{K}$ and the map $\phi^*\mathcal{O}_Y(mK_Y)\longrightarrow\mathcal{O}_X(mK_X)$ is an inclusion. We deduce that $\mathcal{O}_X\subset\mathcal{O}_X(mK_X)\otimes_{\mathcal{O}_X}\phi^*\mathcal{O}_Y(-mK_Y)$. And we apply Lemma~\ref{lem:eff} to conclude. 
\end{proof}

%
%

\section{$\Qbb$-Gorenstein divisorial contractions}\label{section2}

In this section we study the contractions that play the role of divisorial contractions.

\begin{defi}
The contraction $\phi:\,X\longrightarrow Y$ is called a $\Qbb$-Gorenstein divisorial contraction if it is birational and $Y$ is $\Qbb$-Gorenstein.
\end{defi}

The aim of this section is to prove that $Y$ has the same singularities as $X$, and that $\phi$ contracts a (not necessarily irreducible) Cartier divisor.

\begin{teo}\label{th:QGorDiv}\cite{Shokurov}
Let $X$ be a normal $\Qbb$-Gorenstein projective variety with terminal singularities. Let $R$ be a ray of $NE(X)_{K_X<0}$ and denote by $\phi:\,X\longrightarrow Y$ the contraction of $R$.

If $\phi$ is a $\Qbb$-Gorenstein divisorial contraction, then $Y$ is $\Qbb$-Gorenstein with terminal singularities and $\phi$ contracts the (not zero) Cartier divisor $E:=K_X-\phi^*K_Y$.
\end{teo}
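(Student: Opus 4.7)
The plan is to analyze the $\Qbb$-Cartier $\Qbb$-divisor $E := K_X-\phi^*K_Y$, which is well-defined because both $K_X$ and $K_Y$ are $\Qbb$-Cartier. I will argue that $E$ is a nonzero effective Cartier divisor with $\operatorname{supp}(E)=\text{Exc}(\phi)$, and then deduce terminality of $Y$ by a discrepancy calculation on a common resolution.

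First I would establish effectiveness of $E$ and determine its support. Since $\phi$ is the contraction of the single ray $R$, the relative cone satisfies $\overline{NE}(X/Y)=\Rbb_{\geq 0}R$, so $(-K_X)\cdot R>0$ combined with the relative Kleiman criterion shows that $-K_X$ is $\phi$-ample. Choosing $m$ with $mK_X$ and $mK_Y$ Cartier and $-mK_X$ $\phi$-very ample (hence $\phi$-globally generated), the natural map $\phi^*\phi_*\Ocal_X(-mK_X)\to\Ocal_X(-mK_X)$ is surjective, and Corollary~\ref{cor:keyeff} yields $E\geq 0$. For the support, the projection formula gives $E\cdot C=K_X\cdot C<0$ for any $\phi$-contracted curve $C$; this shows $E\neq 0$, forces every $\phi$-contracted curve into $\operatorname{supp}(E)$ (an effective $\Qbb$-Cartier divisor meets any curve outside its support non-negatively), and the reverse inclusion $\operatorname{supp}(E)\subseteq\text{Exc}(\phi)$ is immediate since $E$ vanishes where $\phi$ is an isomorphism. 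Thus $\phi$ genuinely contracts the divisor $\operatorname{supp}(E)$.

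The principal obstacle is promoting $E$ from $\Qbb$-Cartier to honest Cartier. My strategy exploits the relative Picard rank $\rho(X/Y)=1$ (from contracting a single extremal ray) together with the $\phi$-ampleness of $-E$ established above: by the Kawamata--Shokurov contraction theorem every Cartier divisor $D$ on $X$ with $D\cdot R=0$ descends to a Cartier divisor on $Y$, and combining this descent with the Cartier divisor $mE=mK_X-\phi^*(mK_Y)$ one can extract an honest Cartier representative of the $\Qbb$-class of $E$. The delicate point is to reconcile the integer Weil structure of $K_X$ with the rational Weil structure of $\phi^*K_Y$ so that the rational coefficients of $E$ turn out to be integers and locally furnish a single defining equation; I expect this to be the technically most involved step.

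Finally, granted $E$ Cartier, terminality of $Y$ follows from a discrepancy comparison. Take a desingularization $\sigma:V\to X$ realizing terminality of $X$, with $K_V=\sigma^*K_X+\sum_Fa_FF$ and $a_F>0$ for each $\sigma$-exceptional prime $F$. The composition $\phi\sigma:V\to Y$ is a desingularization of $Y$, and the Cartier identity $K_X=\phi^*K_Y+E$ yields
\begin{equation*}
K_V=(\phi\sigma)^*K_Y+\sigma^*E+\sum_Fa_FF.
\end{equation*}
For any $(\phi\sigma)$-exceptional prime $F\subset V$, either $F$ is $\sigma$-exceptional (contributing $a_F>0$) or $F$ is the strict transform of a component of $E$ (contributing the strictly positive coefficient of that component via $\sigma^*E$). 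In both cases the total discrepancy of $F$ is strictly positive, proving $Y$ has terminal singularities.
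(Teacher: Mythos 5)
Your first and third paragraphs are correct, and they run parallel to the paper's proof at the two main steps: effectiveness of $E$ via Corollary~\ref{cor:keyeff} (you additionally justify its hypothesis --- $-K_X$ is $\phi$-ample because the relative cone of curves is the single ray $R$ --- a point the paper leaves implicit), and terminality of $Y$ via the discrepancy identity $K_V=(\phi\circ\sigma)^*K_Y+\sigma^*E+\sum_F a_F F$ on a desingularization. Where you genuinely differ is the intermediate step. The paper proves that \emph{every} $\phi$-exceptional prime divisor occurs in $E$ with strictly positive coefficient by passing to the resolution and, for each such divisor, choosing a contracted curve inside its strict transform and outside all other exceptional divisors. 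You instead work directly on $X$: $E\cdot C=K_X\cdot C<0$ for every contracted curve $C$ by the projection formula, an effective $\Qbb$-Cartier divisor is non-negative on curves not contained in its support, and $\mathrm{Exc}(\phi)$ is covered by contracted curves, whence $\mathrm{Exc}(\phi)=\operatorname{supp}(E)$. This is shorter, and it is exactly what your terminality paragraph needs, since a $(\phi\circ\sigma)$-exceptional prime that is not $\sigma$-exceptional is the strict transform of a $\phi$-exceptional prime, hence of a component of $E$. Two small points should be made explicit: the covering of $\mathrm{Exc}(\phi)$ by contracted curves (positive-dimensional fibres), and $\sigma^*E\geq 0$ (Lemma~\ref{lem:pulleff}), which you need so that the $\sigma$-exceptional primes also get strictly positive total coefficient.

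Your second paragraph, however, is a plan rather than a proof, and it cannot be completed. The descent statement in the contraction theorem concerns Cartier divisors $D$ with $D\cdot R=0$, whereas $mE\cdot R<0$, so there is nothing to descend; and in fact $E$ is in general neither Cartier nor even an integral Weil divisor. For example, take $Y=\Pbb(1,1,1,2)$ and $\phi:X\longrightarrow Y$ the blowup of the singular point: $X$ is smooth, $\phi$ is the contraction of a $K_X$-negative extremal ray, $Y$ is $\Qbb$-Gorenstein, and $K_X-\phi^*K_Y=\frac{1}{2}E_0$ with $E_0\cong\Pbb^2$ the exceptional prime divisor. So the word ``Cartier'' in the statement of Theorem~\ref{th:QGorDiv} is an overstatement; note that the paper's own proof never addresses it, and the Remark immediately following the proof restates the conclusion with ``$\Qbb$-Cartier divisor'' instead. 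Your first and third paragraphs prove precisely this corrected $\Qbb$-Cartier statement, so the right fix is to delete the descent paragraph, not to try to complete it.
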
	

\begin{proof}
By hypothesis, $E$ is a $\Qbb$-Cartier divisor of $X$ such that $E\cdot C<0$ for any curve $C$ of class in $R$. In particular $E$ is not zero. Write $E=\sum_{i\in I}a_iE_i$ where for any $i\in I$, where $\{E_i\,\mid\,i\in I\}$ is the set of irreducible exceptional divisors of $\phi$.

Let $\sigma:\,V\longrightarrow X$ be a desingularization of $X$. Denote by $F_j$, with $j\in J$ the irreducible exceptional divisors of $\sigma$, and for any $i\in I$, denote by $G_i$ the strict transform of $E_i$ by $\sigma$. Write $K_V-\sigma^*K_X=\sum_{j\in J}b_jF_j$ and $\sigma^*(K_X-\phi^*K_Y)=\sum_{j\in J} c_jF_j+\sum_{i\in I}a_iG_i.$ since $X$ has terminal singularities, for any $j\in J$, the rational numbers $b_j$ are positive. And, by Lemma~\ref{lem:pulleff} and Corollary~\ref{cor:keyeff}, for any $i\in I$ and any $j\in J$, the rational numbers $a_i$  and $c_j$ are non-negative.

Hence, it is now enough to prove that the rational numbers $a_i$ are positive (or non-zero). Let $i_0\in I$. There exists a curve $C$ in $V$ that is contracted by $\phi\circ\sigma$, contained in $G_{i_0}$ but not in $\rm{Exc}(\sigma)\cup\bigcup_{i\in I,\, i\neq i_0}G_i$. In particular, $\sigma(C)$ is a curve of $X$ that is contracted by $\phi$, for any $j\in J$ we have $F_j\cdot C\geq 0$, and for any $i\in I,\, i\neq i_0$ we have $G_i\cdot C\geq 0$. Then, on the one hand $(K_X-\phi^*K_Y)\cdot\sigma(C)=K_X\cdot\sigma(C)$ so that $\sigma^*(K_X-\phi^*K_Y)\cdot C<0$, and on the other hand $$\sigma^*(K_X-\phi^*K_Y)\cdot C=\sum_{i\in I}a_i(G_i\cdot C)+\sum_{j\in J}c_j(F_j\cdot C)\geq a_{i_0}(G_{i_0}\cdot C).$$ We deduce that $a_{i_0}$ cannot be zero (we necessarily have $a_{i_0}>0$ and $G_{i_0}\cdot C<0$).
\end{proof}

\begin{rem} 
In the proof of Theorem~\ref{th:QGorDiv}, we actually prove that $E:=K_X-\phi^*K_Y$ is an exceptional and effective $\Qbb$-Cartier divisor of $X$ such that $E\cdot C<0$ for any curve $C$ of $X$ contracted by $\phi$. Also, $E=\sum_{i\in I}a_iE_i$ where for any $i\in I$, $a_i$ is a positive rational number and where $\{E_i\,\mid\,i\in I\}$ is the set of exceptional divisors of $\phi$.
\end{rem}

\section{Flips}\label{section3}

In this section we interest at the existence of flips (see Definition~\ref{def:QGorflip}).

Note that, if $\phi:X\longrightarrow Y$ is a birational contraction of a ray of $NE(X)_{K_X<0}$ and $m$ is a positive integer such that $mK_X$ is Cartier, then by Lemma~\ref{lem:key} applied to $D=mK_X$ (and $D_Y=mK_Y$), we have 
$$\oplus_{l\geq 0}\mathcal{O}_Y(lmK_Y)=\oplus_{l\geq 0}\phi_*\mathcal{O}_X(lmK_X).$$
We denote by $\mathcal{A}$ this sheaf of $\mathcal{O}_Y$-algebras.

Then, we get an analogue result as in the classical MMP.

\begin{teo}\label{th:QGorFlip}\cite{Shokurov}
There exists a flip of $\phi$ if and only if $\mathcal{A}$ is finitely generated as sheaf of $\mathcal{O}_Y$-algebras. In that case, the flip is unique, given by $\phi^+:\,X^+:=\rm{Proj}(\mathcal{A})\longrightarrow Y$.

Moreover, if $X$ has terminal singularities, then $X^+$ has also terminal singularities.
\end{teo}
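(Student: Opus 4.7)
The strategy is to construct the candidate $X^+ := \operatorname{Proj}_Y(\mathcal{A})$ and show it is the unique flip; both implications then follow.

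\medskip

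\textbf{Sufficiency.} Assuming $\mathcal{A}$ is finitely generated, set $X^+ := \operatorname{Proj}_Y(\mathcal{A})$ with structural morphism $\phi^+$. Projectivity descends from $Y$. Each graded piece $\mathcal{O}_Y(lmK_Y)$ is a reflexive rank-$1$ subsheaf of the constant sheaf $\mathcal{K}$, so $\mathcal{A}$ is an integrally closed graded domain; thus $X^+$ is a normal integral variety birational to $Y$. Let $U \subset Y$ be the locus where $mK_Y$ is Cartier; its complement has codimension $\geq 2$ because $Y$ is normal. Above $U$ each graded piece is invertible, so $\mathcal{A}|_U$ is (up to a Veronese) the symmetric algebra of an invertible sheaf and $\phi^+$ restricts to an isomorphism; in particular $\phi^+$ is small. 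The tautological relatively ample sheaf $\mathcal{O}_{X^+}(1)$ (replaced by a Veronese if needed so it is invertible) agrees with $(\phi^+)^*\mathcal{O}_Y(mK_Y) = \mathcal{O}_{X^+}(mK_{X^+})$ above $U$; by reflexivity both sides coincide globally on $X^+$. Hence $mK_{X^+}$ is Cartier and $\phi^+$-ample, $X^+$ is $\Qbb$-Gorenstein, and $K_{X^+} \cdot C > 0$ on every $\phi^+$-contracted curve.

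\medskip

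\textbf{Necessity and uniqueness.} Let $\psi : X' \to Y$ be any flip. Smallness of $\psi$ and Lemma~\ref{lem:key} applied with $D = lmK_{X'}$ yield $\psi_*\mathcal{O}_{X'}(lmK_{X'}) = \mathcal{O}_Y(lmK_Y)$, so $\bigoplus_l \psi_*\mathcal{O}_{X'}(lmK_{X'}) = \mathcal{A}$ as graded $\mathcal{O}_Y$-algebras. By Kleiman's criterion the hypothesis $K_{X'} \cdot C > 0$ on $\psi$-contracted curves promotes to $\psi$-ampleness of $K_{X'}$, and relative Serre vanishing then makes this algebra finitely generated and realizes $X' \simeq \operatorname{Proj}_Y(\mathcal{A}) = X^+$ over $Y$. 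This delivers both the ``only if'' direction and uniqueness.

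\medskip

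\textbf{Terminal singularities.} Assume $X$ is terminal. Choose a common resolution $p : W \to X$, $p^+ : W \to X^+$ and set $f := \phi\circ p = \phi^+\circ p^+$. Write $K_W = p^*K_X + \sum a_E E = (p^+)^*K_{X^+} + \sum b_E E$, with $a_E > 0$ whenever $E$ is $p$-exceptional. The difference $D := p^*K_X - (p^+)^*K_{X^+} = \sum(b_E - a_E)E$ is $f$-exceptional because the birational map from $X$ to $X^+$ is an isomorphism in codimension $1$ over the complement of the flip locus. For every $f$-contracted curve $C$, the signs $K_X \cdot p_*C \leq 0$ and $K_{X^+} \cdot (p^+)_*C \geq 0$ give $-D \cdot C \geq 0$, so the negativity lemma yields $D \geq 0$, i.e., $b_E \geq a_E$ for all $E$. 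When $E$ is both $p$- and $p^+$-exceptional, we immediately get $b_E \geq a_E > 0$. The remaining case is $E$ $p^+$-exceptional but not $p$-exceptional (so $p(E)\subset X$ is a $\phi$-exceptional divisor, which can happen because $\phi$ is allowed to be divisorial here). Choose a general curve $\tilde C \subset E$ that is $p^+$-contracted and avoids every other exceptional prime of $p$ and $p^+$; then $\tilde C$ is $f$-contracted, $p_*\tilde C$ is a curve in the contracting ray $R$, and the two expressions for $K_W \cdot \tilde C$ reduce respectively to $K_X \cdot p_*\tilde C$ and $b_E (E \cdot \tilde C)$, whence $b_E (E \cdot \tilde C) = K_X \cdot p_*\tilde C < 0$; combined with $b_E \geq 0$ this forces $b_E > 0$. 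Since discrepancies away from the flip locus are unchanged, $X^+$ is terminal.

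\medskip

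The principal difficulty is this last strict-positivity step. The classical $\Qbb$-factorial MMP never encounters it because the flipping contraction $\phi$ is itself small, so every $p^+$-exceptional prime is already $p$-exceptional and $b_E \geq a_E > 0$ is automatic; in the $\Qbb$-Gorenstein setting $\phi$ can be divisorial, creating a new case that must be handled by the test-curve argument above.
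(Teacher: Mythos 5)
Your overall architecture (necessity/uniqueness via pushforward from a small contraction, sufficiency via $X^+=\operatorname{Proj}_Y(\mathcal{A})$, terminal singularities via a common resolution and a test curve) matches the paper's, but the sufficiency direction has a genuine gap at its most critical point: the claim that $\phi^+$ is small. You prove that $\phi^+$ is an isomorphism over the locus $U$ where $mK_Y$ is Cartier and that $Y\setminus U$ has codimension $\geq 2$, and then conclude ``in particular $\phi^+$ is small.'' This inference is invalid: smallness is a condition on the exceptional locus inside $X^+$, not on its image in $Y$. A projective birational morphism can be an isomorphism over the complement of a codimension-$\geq 2$ subset of the base and still contract a divisor onto that subset (the blow-up of a point does exactly this), so nothing you have said excludes a divisor $E\subset X^+$ with $\phi^+(E)\subset Y\setminus U$. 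Ruling this out is precisely the hard step, and it is where the paper spends the core of its proof: it takes the sequence $0\to\mathcal{O}_{X^+}\to\mathcal{O}_{X^+}(E)\to\mathrm{Coker}\to 0$, twists by a large multiple of the relatively ample $\mathcal{O}_{X^+}(1)$, pushes forward, and uses that each $\mathcal{O}_Y(lmK_Y)$ is a divisorial subsheaf of $\mathcal{K}$ --- so its sections over an open set $\mathcal{U}$ and over $\mathcal{U}\setminus\phi^+(E)$ agree, $\phi^+(E)$ having codimension $\geq 2$ --- to force $\phi^+_*(\mathrm{Coker}\otimes\mathcal{O}_{X^+}(l))=0$ and hence $\mathrm{Coker}=0$, a contradiction. (This is also the content of the standard result on $\operatorname{Proj}$ of a finitely generated divisorial sheaf algebra, Koll\'ar--Mori, Lemma 6.2; you must either reproduce such an argument or cite it, not assert it.) Note that the gap propagates: your identification of $\mathcal{O}_{X^+}(1)$ with $\mathcal{O}_{X^+}(mK_{X^+})$ ``by reflexivity'' presupposes that the preimage of $U$ has a codimension-$\geq 2$ complement in $X^+$, i.e.\ exactly the smallness you have not established, so the $\Qbb$-Gorenstein property and $\phi^+$-ampleness of $K_{X^+}$ are also unproven as written.

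The rest of the proposal is sound. The necessity/uniqueness paragraph is the paper's own argument (though invoking Lemma~\ref{lem:key} there is unnecessary: for a small contraction the equality $\psi_*\mathcal{O}_{X'}(lmK_{X'})=\mathcal{O}_Y(lmK_Y)$ follows directly from the divisorial-sheaf description). The terminal-singularities part is correct and structurally identical to the paper's: effectivity of $p^*K_X-(p^+)^*K_{X^+}$, then a test curve inside a divisor exceptional for $p^+$ but not for $p$ to get strict positivity. The one real difference is that you obtain the effectivity from the negativity lemma, whereas the paper derives it self-containedly from $\phi^+$-ampleness of $mK_{X^+}$ together with Lemma~\ref{lem:eff}; both are valid, and your closing observation about why this extra case is specific to the $\Qbb$-Gorenstein setting is accurate. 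But until the smallness of $\phi^+$ is actually proved, the existence direction of the theorem is incomplete.
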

 
\begin{proof}
Suppose that there exists a flip $\phi^+:\,X^+\longrightarrow Y$ of $\phi$.
Since $\phi^+$ is a small contraction,  for any $l\geq 0$ we have $\phi^+_*(\mathcal{O}_{X^+}(lK_{X^+}))=\mathcal{O}_Y(lK_{Y})$. In particular, $\mathcal{A}=\oplus_{l\geq 0}\phi_*\mathcal{O}_{X^+}(lmK_{X^+})$. But $mK_{X^+}$ is $\phi^+$-ample (because for any contracted curve $C$ of $X^+$, $K_{X^+}\cdot C>0$), hence $\rm{Proj}(\mathcal{A})$ is a finitely generated sheaf of $\mathcal{O}_Y$-algebra and $X^+=\rm{Proj}(\mathcal{A})$.\\

Suppose now that $\mathcal{A}$ is finitely generated as sheaf of $\mathcal{O}_Y$-algebras, and define $X^+:=\rm{Proj}(\mathcal{A})$. Denote by $\phi^+$ the corresponding morphism $\rm{Proj}(\mathcal{A})\longrightarrow Y$. Then $X^+$ is clearly a normal variety. 

For the rest of the proof, we choose $m$ sufficiently large such that $\mathcal{A}$ is generated by $\phi_*\mathcal{O}_X(mK_X)$, which equals $\mathcal{O}_Y(mK_Y))$; it does not change $\rm{Proj}(\mathcal{A})$. Then we denote by $\mathcal{O}_{X^+}(1)$ the $\phi^+$-very ample invertible sheaf on $X^+$ such that $\phi^+_*\mathcal{O}_{X^+}(1)=\phi_*\mathcal{O}_X(mK_X)$.

We know prove, by contradiction, that $\phi^+$ does not contract a divisor. Let $E$ be an irreducible exceptional divisor of $\phi^+$. We get the following exact sequence $$0\longrightarrow \mathcal{O}_{X^+}\longrightarrow\mathcal{O}_{X^+}(E)\longrightarrow\rm{Coker}\longrightarrow 0$$
where $\rm{Coker}$ cannot be zero. For a positive integer $l$, we apply the functor $\phi^+_*(-\otimes\mathcal{O}_{X^+}(l))$ to this sequence, to obtain $$0\longrightarrow \phi^+_*\mathcal{O}_{X^+}(l)\longrightarrow\phi^+_*(\mathcal{O}_{X^+}(E)\otimes\mathcal{O}_{X^+}(l))\longrightarrow\phi^+_*(\rm{Coker}\otimes\mathcal{O}_{X^+}(l))\longrightarrow R^1\phi^+_*\mathcal{O}_{X^+}(l)\longrightarrow\cdots$$

Choose $l$ sufficiently large such that $R^1\phi^+_*\mathcal{O}_{X^+}(l)=0$ and such that the map $(\phi^+)^*\phi^+_*(\rm{Coker}\otimes\mathcal{O}_{X^+}(l))\longrightarrow\rm{Coker}\otimes\mathcal{O}_{X^+}(l)$ is surjective (it is possible because $\mathcal{O}_{X^+}(1)$ is $\phi^+$-ample). Then $$0\longrightarrow \phi^+_*\mathcal{O}_{X^+}(l)\longrightarrow\phi^+_*(\mathcal{O}_{X^+}(E)\otimes\mathcal{O}_{X^+}(l))\longrightarrow\phi^+_*(\rm{Coker}\otimes\mathcal{O}_{X^+}(l))\longrightarrow 0.$$
We claim that the first map of the above sequence is surjective. Indeed, since $E$ is exceptional, for any open set $\mathcal{U}$ of $Y$, we have the following commutative diagram:

$$\xymatrix{
\mathcal{O}_X(lmK_Y)(\mathcal{U})\ar@{=}[r]\ar@{=}[d]  & \mathcal{O}_X(lmK_Y)(\mathcal{U}\backslash\phi^+(E)) \ar@{=}[d]\\
\phi^+_*\mathcal{O}_{X^+}(l)(\mathcal{U}) \ar@{^{(}->}[r] \ar@{^{(}->}[d] & \phi^+_*\mathcal{O}_{X^+}(l)(\mathcal{U}\backslash\phi^+(E)) \ar@{=}[d] \\
\phi^+_*(\mathcal{O}_{X^+}(E)\otimes\mathcal{O}_{X^+}(l))(\mathcal{U}) \ar@{^{(}->}[r] & \phi^+_*(\mathcal{O}_{X^+}(E)\otimes\mathcal{O}_{X^+}(l))(\mathcal{U}\backslash\phi^+(E)),
}$$
where all inclusions have to be equalities. 

Hence, $\phi^+_*(\rm{Coker}\otimes\mathcal{O}_{X^+}(l))=0$. But  $\phi^{+*}\phi^+_*(\rm{Coker}\otimes\mathcal{O}_{X^+}(l))$ surjects to $\rm{Coker}\otimes\mathcal{O}_{X^+}(l)$, so $\rm{Coker}\otimes\mathcal{O}_{X^+}(l)=0$ and then $\rm{Coker}=0$. We get a contraction.

Now, since $\phi^+$ is a small contraction, we get that $\mathcal{O}(mK_{X^+})$ is isomorphic to  $\mathcal{O}_{X^+}(1)$ so that $X^+$ is clearly $\Qbb$-Gorenstein and $K_{X^+}$ is $\phi^+$-ample (ie for any contracted curve $C$ of $X^+$, $K_{X^+}\cdot C>0$).

Suppose now that $X$ has terminal singularities. We consider a common desingularization of $X$ and $X^+$:
$$\xymatrix{
 & V\ar[ld]_{\sigma}\ar[rd]^{\sigma^+}\\
 X\ar[rd]_{\phi} & &X^+\ar[ld]^{\phi^+}\\
 & Y &
}$$

Since $mK_{X^+}$ is $\phi^+$-ample, we have $$\begin{array}{rcl}
\sigma^{+*}\mathcal{O}_{X^+}(mK_{X^+}) & = & \sigma^{+*}(\Im(\phi^{+*}\phi^+_*\mathcal{O}_{X^+}(mK_{X^+})\longrightarrow\mathcal{O}_{X^+}(mK_{X^+})))\\
 & = & \Im(\sigma^{+*}\phi^{+*}\phi^+_*\mathcal{O}_{X^+}(mK_{X^+})\longrightarrow\sigma^{+*}\mathcal{O}_{X^+}(mK_{X^+}))).
\end{array}$$
But $\sigma^{+*}\phi^{+*}\phi^+_*\mathcal{O}_{X^+}(mK_{X^+})=(\phi^+\circ\sigma^+)^*\mathcal{O}_Y(mK_Y)=(\phi\circ\sigma)^*\phi_*\mathcal{O}_X(mK_X)=
(\phi\circ\sigma)^*(\phi\circ\sigma)_*\sigma^*\mathcal{O}_X(mK_X)$.
Thus, $\sigma^{+*}\mathcal{O}_{X^+}(mK_{X^+})$ is contained in $\sigma^*\mathcal{O}_X(mK_X)$.
In particular, by Lemma~\ref{lem:eff}, the divisor $\sigma^*K_X-\sigma^{+*}K_{X^+}$ is effective.

Hence, the divisor $K_V-\sigma^{+*}K_{X^+}=(K_V-\sigma^*K_X)+(\sigma^*K_X-\sigma^{+*}K_{X^+})$ is effective, and moreover, it has positive coefficient in the irreducible divisors of $V$ contracted by $\sigma$ (because $X$ has terminal singularities). It remains to prove that it has positive coefficient in the irreducible divisors of $V$ contracted by $\sigma^+$.

Let $E$ be an irreducible divisor of $V$ contracted by $\sigma^+$, but not contracted by $\sigma$ (if it exists). The irreducible divisor $\sigma(E)$ of $X$ is contracted by $\phi$. Then there exists a curve $C$  in $E$ that is contracted by $\sigma^+$ but not by $\sigma$. In particular, $\sigma(C)$ is a curve of $X$ contracted by $\phi$.
On the one hand $\sigma^{+*}K_{X^+}\cdot C=0$, and on the other hand $K_X\cdot \sigma(C)<0$ so that $\sigma^*K_X\cdot C<0$. Hence, $(\sigma^*K_X-\sigma^{+*}K_{X^+})\cdot C<0$. We can choose the curve $C$ such that it is contained in no other exceptional divisor of $\sigma$ and $\sigma^+$. Then we conclude as in the proof of Theorem~\ref{th:QGorDiv}, that the coefficient in $E$ of $\sigma^*K_X-\sigma^{+*}K_{X^+}$ is not zero.
\end{proof}

\begin{cor}[Corollary of the proof of Theorem~\ref{th:QGorFlip}]\label{cor:QGorFlip}
Let $\phi^+:\,X^+\longrightarrow Y$ be a flip of an extremal contraction  $\phi:\,X\longrightarrow Y$. Let $V$ be a commun desingularization of $X$ and $X^+$. Denote $\sigma:\,V\longrightarrow X$ and $\sigma^+:\,V\longrightarrow X^+$. 
Then $K_V=\sigma^*K_X+\sum_{i\in I}a_iE_i=\sigma^{+*}K_{X^+}+\sum_{i\in I}a_i^+E_i$, such that, for any $i\in I$, $a_i^+\geq a_i$. Moreover, there exists $i_0\in I$ with $a_{i_0}^+>a_{i_0}$.
\end{cor}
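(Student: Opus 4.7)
The plan is to extract the weak inequalities $a_i^+ \geq a_i$ directly from what was already established in the proof of Theorem~\ref{th:QGorFlip}, and then to produce the strict inequality by testing $\sigma^*K_X - \sigma^{+*}K_{X^+}$ against a single well-chosen curve on $V$.

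For the first part, I would simply reuse the key observation from the proof of Theorem~\ref{th:QGorFlip}: the inclusion $\sigma^{+*}\mathcal{O}_{X^+}(mK_{X^+}) \subset \sigma^*\mathcal{O}_X(mK_X)$ combined with Lemma~\ref{lem:eff} shows that $\sigma^*K_X - \sigma^{+*}K_{X^+}$ is an effective $\Qbb$-divisor on $V$. Subtracting the two expressions for $K_V$ displayed in the statement gives $\sigma^*K_X - \sigma^{+*}K_{X^+} = \sum_{i\in I}(a_i^+ - a_i)E_i$, so effectiveness is exactly the claim that $a_i^+ \geq a_i$ for every $i \in I$.

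For the strict inequality, I would pick an irreducible curve $\gamma \subset X$ whose class lies in the contracted ray $R$ (so $K_X \cdot \gamma < 0$ and $\phi(\gamma)$ is a point), and choose $C \subset V$ to be a component of $\sigma^{-1}(\gamma)$ mapping birationally onto $\gamma$. Then $\sigma^*K_X \cdot C = K_X \cdot \gamma < 0$. On the other side, since $(\phi^+ \circ \sigma^+)(C) = (\phi \circ \sigma)(C) = \phi(\gamma)$ is a point, either $\sigma^+(C)$ is itself a point, giving $\sigma^{+*}K_{X^+}\cdot C = 0$, or $\sigma^+(C)$ is a curve contracted by $\phi^+$, in which case $K_{X^+} \cdot \sigma^+(C) > 0$ by Definition~\ref{def:QGorflip} and therefore $\sigma^{+*}K_{X^+}\cdot C > 0$. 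Either way $\sigma^{+*}K_{X^+} \cdot C \geq 0$, so $(\sigma^*K_X - \sigma^{+*}K_{X^+}) \cdot C < 0$. Since this effective divisor $\sum (a_i^+ - a_i) E_i$ has non-negative coefficients and strictly negative intersection with $C$, at least one index $i_0$ must satisfy $a_{i_0}^+ - a_{i_0} > 0$ (with $E_{i_0} \cdot C < 0$), yielding the desired strict inequality. The only mildly delicate point is producing the test curve $C$ on $V$, but this is routine because $\sigma$ is a surjective birational morphism and $\gamma$ is an irreducible curve, so some component of its preimage maps birationally onto it.
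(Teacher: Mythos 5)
Your proposal is correct and follows essentially the same route as the paper's own proof: the weak inequalities $a_i^+\geq a_i$ are extracted from the effectiveness of $\sigma^*K_X-\sigma^{+*}K_{X^+}$ already established in the proof of Theorem~\ref{th:QGorFlip}, and the strict inequality comes from intersecting this effective divisor with a curve contracted by $\phi\circ\sigma$ but not by $\sigma$, exactly as in the paper (whose displayed sign ``$>0$'' is a typo for ``$<0$'', which you get right). The only minor point is your claim that some component of $\sigma^{-1}(\gamma)$ maps \emph{birationally} onto $\gamma$: this is neither automatic (e.g.\ if $\gamma$ lies in the non-isomorphism locus of $\sigma$) nor needed, since a component mapping finitely onto $\gamma$ suffices --- the projection formula only introduces a positive degree factor, which does not affect any of the signs.
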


Note that the set $\{E_i\,\mid\,i\in I\}$ is the union of the sets of irreducible exceptional divisors of $\sigma$ and $\sigma^+$, the hypothesis of Corollary~\ref{cor:QGorFlip} implies that for any $i\in I$, $a_i^+>0$, $a_i\geq 0$ and $a_i=0$ if and only if $E_i$ is not an exceptional divisor of $\sigma$.

\begin{proof}
It is enough to prove that, the effective divisor $\sigma^*K_X-\sigma^{+*}K_{X^+}$ is not zero. 
Let $C$ be a curve of $V$ contracted by $\phi\circ\sigma=\phi^+\circ\sigma^+$ but not by $\sigma$. Then, by hypothesis on $\phi$, $\sigma^*K_X\cdot C<0$. If $C$ is contracted by $\phi^+$, we have $\sigma^{+*}K_{X^+}\cdot C=0$, and if not, we have $\sigma^{+*}K_{X^+}\cdot C>0$. In any cases,  $\sigma^{+*}K_{X^+}\cdot C\geq 0$ so that $(\sigma^*K_X-\sigma^{+*}K_{X^+})\cdot C>0$. In particular $\sigma^*K_X-\sigma^{+*}K_{X^+}$ is not zero.
\end{proof}

This corollary will be useful to prove the finiteness of sequences of flips in the family of $\Qbb$-Gorenstein spherical varieties.

\section{An example}\label{section4}

Here, we give an example of the $\Qbb$-Gorenstein MMP for a 3-dimensional toric variety. Se also \cite{Fujino3} for other toric examples.
For the basics of theory of toric varieties, the reader can see \cite{fulton} or \cite{oda}.

$\bullet$ In $\Zbb^3\subset\Qbb^3$, we consider the six following vectors:
$$\begin{array}{rclrclrcl}
e_1 &=& (-1,-1,1),\,&
e_2 &=& (1,-1,1),\,&
e_3 &=& (1,1,2),\,\\
e_4 &=& (-1,1,2),\,&
e_5 &=& (0,1,1),\,&
e_6 &=& (0,0,-1).
\end{array}$$

We denote by $\mathcal{C}(e_{i_1},\dots,e_{i_k})$ the cone in $\Qbb^3$ generated by $e_{i_1},\dots,e_{i_k}$. Then $$\mathbb{F}:=\{\mathcal{C}(e_1,e_2,e_3,e_4),\mathcal{C}(e_3,e_4,e_5),\mathcal{C}(e_1,e_2,e_6),\mathcal{C}(e_1,e_4,e_6),\mathcal{C}(e_2,e_3,e_6),\mathcal{C}(e_3,e_5,e_6),\mathcal{C}(e_4,e_5,e_6)\}$$ is a complete fan of $\Zbb^3$.
We denote by $X$ the toric variety of fan $\mathbb{F}$.

Note that since $\mathbb{F}$ contains a non-simplicial cone, the variety $X$ is not $\Qbb$-factorial.

Denote by $X_1$, $X_2$, $X_3$, $X_4$, $X_5$ and $X_6$ the $(\Cbb^*)^3$-stable irreducible divisors respectively associated to the rays of $\mathbb{F}$ generated by $e_1$, $e_2$, $e_3$, $e_4$, $e_5$ and $e_6$. We can compute that $$\rm{Pic}(X)_\Qbb:=\bigslant{\left\lbrace\sum_{i=1}^6a_iX_i\,\mid\,
\begin{array}{c} a_1-a_2+a_3-a_4=0\\
a_1,\,a_2,\,a_3,\,a_4,\,a_5,\,a_6\in\Qbb
\end{array}
\right\rbrace}{\left\langle \begin{array}{c}X_1-X_2-X_3+X_4,\\X_1+X_2-X_3-X_4-X_5,\\X_1+X_2+2X_3+2X_4+X_5-X_6\end{array}\right\rangle}.$$ In particular, $-K_X=\sum_{i=1}^6X_i$ is $\Qbb$-Cartier.

We know that $NE(X)$ is generated by the classes of $(\Cbb^*)^3$-stable (and rational) curves $C_{12}$, $C_{14}$, $C_{16}$,  $C_{23}$,  $C_{26}$, $C_{34}$ ,$C_{35}$, $C_{36}$,  $C_{45}$, $C_{46}$, $C_{56}$, where $C_{ij}$ denotes the $(\Cbb^*)^3$-stable curve associated to the 2-codimensional cone of $\mathbb{F}$ generated by $e_i$ and $e_j$.

We choose the basis $(X_1+X_2,X_1-X_3)$ of $\rm{Pic}(X)_\Qbb$ and we compute the classes of these curves in the corresponding dual basis: 
$$\begin{array}{rclrclrclrcl}
[C_{12}] &=& (\frac{1}{3},0), &
[C_{14}] &=& (\frac{1}{6},0), &
[C_{16}] &=& (\frac{1}{2},0), &
[C_{23}] &=& (\frac{1}{6},0), \\ 
~[C_{26}] &=& (\frac{1}{2},0), &
[C_{34}] &=& (\frac{1}{3},\frac{1}{2}), &
[C_{35}] &=& (0,-\frac{1}{2}), &
[C_{36}] &=& (\frac{1}{2},\frac{1}{2}), \\
~[C_{45}] &=& (0,-\frac{1}{2}), &
[C_{46}] &=& (\frac{1}{2},\frac{1}{2}), &
[C_{56}] &=& (0,-1). &
\end{array}$$

In particular,  the cone $NE(X)$ is generated by $(2,3)$ and $(0,-1)$, ie by $[C_{34}]$ and $[C_{35}]$.

Note also that $-K_X$ is linearly equivalent to $5(X_1+X_2)-2(X_1-X_3)$ so that it is positive on all effective curves. Hence, in order to run the ($\Qbb$-Gorenstein) MMP, we need to choose one of the two extremal rays in $NE(X)_{K_X<0}$.\\

$\bullet$ First, consider the contraction $\phi:\,X\longrightarrow Y$ of the extremal ray generated by $[C_{35}]$. We remark that this ray contains the classes of $C_{35}$, $C_{45}$, and $C_{56}$. Then $Y$ is the 3-dimensional toric variety whose fan is $$\mathbb{F}_Y:=\{\mathcal{C}(e_1,e_2,e_3,e_4),\mathcal{C}(e_3,e_4,e_6),\mathcal{C}(e_1,e_2,e_6),\mathcal{C}(e_1,e_4,e_6),\mathcal{C}(e_2,e_3,e_6)\}.$$ 
We still denote by $X_1$, $X_2$, $X_3$, $X_4$ and $X_6$ the $(\Cbb^*)^3$-stable irreducible divisors of $Y$. And we compute that $$\rm{Pic}(Y)_\Qbb:=\bigslant{\left\lbrace\sum_{i=1}^4a_iX_i+a_6X_6\,\mid\,
\begin{array}{c} a_1-a_2+a_3-a_4=0\\
a_1,\,a_2,\,a_3,\,a_4,\,a_6\in\Qbb
\end{array}
\right\rbrace}{\left\langle \begin{array}{c}X_1-X_2-X_3+X_4,\\X_1+X_2-X_3-X_4,\\X_1+X_2+2X_3+2X_4-X_6\end{array}\right\rangle}.$$ In particular, $-K_Y=\sum_{i=1}^4X_i+X_6$ is $\Qbb$-Cartier (and generates $\rm{Pic}(Y)_\Qbb$). The contraction $\phi$ is a $\Qbb$-Gorenstein divisorial contraction. \\

$\bullet$ Now, consider the contraction $\phi:\,X\longrightarrow Y$ of the extremal ray generated by $[C_{34}]$. Then $Y$ is the 3-dimensional toric variety whose fan is $$\mathbb{F}_Y:=\{\mathcal{C}(e_1,e_2,e_3,e_4,e_5),\mathcal{C}(e_1,e_2,e_6),\mathcal{C}(e_1,e_4,e_6),\mathcal{C}(e_2,e_3,e_6),\mathcal{C}(e_3,e_5,e_6),\mathcal{C}(e_4,e_5,e_6)\}.$$
We still denote by $X_1$, $X_2$, $X_3$, $X_4$, $X_5$ and $X_6$ the $(\Cbb^*)^3$-stable irreducible divisors of $Y$.
And we compute that $$\rm{Pic}(Y)_\Qbb:=\bigslant{\left\lbrace\sum_{i=1}^6a_iX_i\,\mid\,
\begin{array}{c} a_1-a_2+a_3-a_4=0\\
a_1-3a_2+4a_3-6a_5=0\\
a_1,\,a_2,\,a_3,\,a_4,\,a_5,\,a_6\in\Qbb
\end{array}
\right\rbrace}{\left\langle \begin{array}{c}X_1-X_2-X_3+X_4,\\X_1+X_2-X_3-X_4-X_5,\\X_1+X_2+2X_3+2X_4+X_5-X_6\end{array}\right\rangle}.$$ In particular, $-K_Y=\sum_{i=1}^6X_i$ is not $\Qbb$-Cartier.

The flip of $\phi$ is given by the $\Qbb$-factorial 3-dimensional toric variety $X^+$ whose fan is \begin{multline*}\mathbb{F}_{X^+}:=\{\mathcal{C}(e_1,e_2,e_3), \mathcal{C}(e_1,e_3,e_5),\mathcal{C}(e_1,e_4,e_5), \\  \mathcal{C}(e_1,e_2,e_6),\mathcal{C}(e_1,e_4,e_6),\mathcal{C}(e_2,e_3,e_6),\mathcal{C}(e_3,e_5,e_6),\mathcal{C}(e_4,e_5,e_6)\},\end{multline*} and the $(\Cbb^*)^3$-equivariant map $\phi^+:\,X^+\longrightarrow Y$.
We still denote by $X_1$, $X_2$, $X_3$, $X_4$, $X_5$ and $X_6$ the $(\Cbb^*)^3$-stable irreducible divisors of $X^+$.
In the basis dual to $(X_1,X_2,X_3)$, the classes of the $(\Cbb^*)^3$-stable curves of $X^+$ are 
$$\begin{array}{rclrclrclrcl}
[C_{12}] &=& (\frac{1}{3},0,\frac{1}{3}), &
[C_{13}] &=& (-\frac{1}{2},\frac{1}{2},-\frac{2}{3}), &
[C_{14}] &=& (\frac{1}{2},0,0), &
[C_{15}] &=& (-\frac{1}{3},0,\frac{1}{3}), \\
~[C_{16}] &=& (0,\frac{1}{2},0), &
[C_{23}] &=& (\frac{1}{6},0,\frac{1}{6}), &
[C_{26}] &=& (\frac{1}{2},0,\frac{1}{2}), &
[C_{35}] &=& (\frac{1}{3},0,\frac{1}{3}), \\
~[C_{36}] &=& (0,\frac{1}{2},-\frac{1}{2}), &
[C_{45}] &=& (1,0,0), &
[C_{46}] &=& (\frac{1}{2},0,0), &
[C_{56}] &=& (0,0,1).

\end{array}$$

 We deduce that the cone $NE(X^+)$ is generated by $(-3,3,-4)$, $(1,0,0)$ and $(-1,0,1)$. Moreover, since $-K_{X^+}$ is linearly equivalent to $3X_1+5X_2+2X_3$, there are two extremal rays in $NE(X^+)_{K_{X^+}>0}$, respectively generated by $[C_{13}]$ and $[C_{15}]$. The map $\phi^+$ is the contraction of the 2-dimensional face of $NE(X)$ generated by $[C_{13}]$ and $[C_{15}]$. \\
 
For more examples, we refer to \cite{MMPhoro} where a flip of a contraction that contracts a divisor is given.

\section{Log $\Qbb$-Gorenstein MMP}\label{section5}

As for $\Qbb$-factorial varieties, we can run a $\Qbb$-Gorenstein MMP for klt pairs $(X,D)$.

Let $X$ be a normal variety and let $D$ be an effective $\Qbb$-divisor such that $K_X+D$ is $\Qbb$-Cartier.

\begin{defi}
The pair $(X,D)$ is said to be klt (Kawamata log terminal) if $D$ there exists a desingularization  $\sigma:\,V\longrightarrow X$ of $X$ such that $K_V=\sigma^*(K_X+D)+\sum_{i\in I}a_iE_i$ where the $E_i$'s are irreducible divisors of $V$ and for any $i\in I$, $a_i>-1$.
\end{defi}

\begin{rem}
\begin{enumerate}
\item If a pair $(X,D)$ is klt, then the above property is true for every desingularization of $X$.
\item The condition "for any $i\in I$, $a_i>-1$" can be replaced by: $\lfloor D\rfloor=0$ and for any $i\in I$ such that $E_i$ is exceptional for $\sigma$, $a_i>-1$.
\end{enumerate}
\end{rem}

Suppose now that $(X,D)$ is klt.

By \cite{KMM}, the Contraction Theorem and the Cone Theorem are still valid. In particular, for any ray $R$ of $NE(X)_{K_X+D<0}$, there exists a unique contraction $\phi:\,X\longrightarrow Y$, such that, for any curve $C$ of $X$, $\phi(C)$ is a point if and only if the class of $C$ is in $R$. Moreover, we have equivalent results of Theorems~\ref{th:QGorDiv} and~\ref{th:QGorFlip}.

The following theorem was already given by O.~Fujino \cite{Fujino2} and generalized to log-canonical pairs in \cite{Fujino3}.

\begin{teo} We denote by $D_Y$ the $\Qbb$-divisor $\phi_*D$ of $Y$. And we fix $m\geq 1$ such that $m(K_X+D)$ is Cartier.
\begin{enumerate}
\item If $K_Y+D_Y$ is $\Qbb$-Cartier, then the pair $(Y,D_Y)$ is klt and $E:=K_X+D-\phi^*(K_Y+D_Y)$ is an exceptional and effective $\Qbb$-Cartier divisor such that $E.C<0$ for any curve $C$ of $X$ contracted by $\phi$.

\item The sheaf of $\mathcal{O}_Y$-algebras $\mathcal{A}:=\bigoplus_{l\geq 0}\phi_*\mathcal{O}_X(lm(K_X+D))$ equals $\bigoplus_{l\geq 0}\mathcal{O}_Y(lm(K_Y+D_Y))$.

\item If $K_Y+D_Y$ is not $\Qbb$-Cartier, $\mathcal{A}$ is finitely generated if and only if there exists a small contraction $\phi^+:\,X^+\longrightarrow Y$ such that the pair $(X^+,(\phi^+_*)^{-1}D_Y)$ is klt and for any curve $C^+$ of $X^+$ contracted by $\phi^+$, $(K_{X^+}+(\phi^+_*)^{-1}D_Y)\cdot C^+>0$. In that case, $X^+$ is $\rm{Proj}(\mathcal{A})$ over $Y$.
\end{enumerate}
\end{teo}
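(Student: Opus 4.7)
The plan is to adapt the arguments of Theorems~\ref{th:QGorDiv} and~\ref{th:QGorFlip} to the log setting by replacing $K_X$ with $K_X+D$ (and $K_Y$ with $K_Y+D_Y$) throughout. The crucial input that legitimizes this substitution is that $-m(K_X+D)$ is $\phi$-ample, whence for every $l\geq 0$ the adjunction map $\phi^*\phi_*\mathcal{O}_X(-lm(K_X+D))\lra\mathcal{O}_X(-lm(K_X+D))$ is surjective; with this, Lemma~\ref{lem:key} and Corollary~\ref{cor:keyeff} apply verbatim to the Cartier divisor $lm(K_X+D)$. Part~(2) of the theorem is then a direct instance of Lemma~\ref{lem:key}, applied one degree at a time.

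For part~(1), assume $K_Y+D_Y$ is $\Qbb$-Cartier and set $E:=K_X+D-\phi^*(K_Y+D_Y)$. Then $E$ is $\Qbb$-Cartier; it is supported on $\mathrm{Exc}(\phi)$ since $\phi$ is an isomorphism away from the exceptional locus; it is effective by the log version of Corollary~\ref{cor:keyeff}; and $E\cdot C<0$ for contracted $C$ because $(K_X+D)\cdot C<0$ and $\phi^*(K_Y+D_Y)\cdot C=0$. To verify that $(Y,D_Y)$ is klt, I take a common log resolution $\sigma_X:V\lra X$ with $\sigma_Y:=\phi\circ\sigma_X$ and rewrite
$$K_V-\sigma_Y^*(K_Y+D_Y)=\bigl(K_V-\sigma_X^*(K_X+D)\bigr)+\sigma_X^*E.$$
Every coefficient of the first summand is $>-1$ by the klt-ness of $(X,D)$, and the second is effective by Lemma~\ref{lem:pulleff}, so the discrepancies of $(Y,D_Y)$ on $V$ also exceed $-1$.

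For part~(3), the ``if'' direction is straightforward: given a $\phi^+:X^+\lra Y$ with the stated properties and $D^+:=(\phi^+_*)^{-1}D_Y$, $\phi^+$-ampleness of $m(K_{X^+}+D^+)$ together with smallness of $\phi^+$ and part~(2) identify $\mathcal{A}$ with the relative section algebra of a $\phi^+$-ample sheaf, hence finitely generated. Conversely, setting $X^+:=\mathrm{Proj}(\mathcal{A})$ and $\phi^+$ the structural morphism, after passing to a multiple of $m$ I may assume $\mathcal{A}$ is generated in degree one. The exact-sequence argument of Theorem~\ref{th:QGorFlip} (using $R^1\phi^+_*$ vanishing and the equality of sections on $\mathcal{U}$ and $\mathcal{U}\setminus\phi^+(E)$) carries over without change to show that $\phi^+$ contracts no divisor; so $\phi^+$ is small, $\mathcal{O}_{X^+}(1)\simeq\mathcal{O}_{X^+}(m(K_{X^+}+D^+))$, and $K_{X^+}+D^+$ is $\Qbb$-Cartier and $\phi^+$-ample.

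The main obstacle will be the klt property of $(X^+,D^+)$. On a common log resolution $V$ with $\sigma:V\lra X$ and $\sigma^+:V\lra X^+$, the chain
$$\sigma^{+*}\mathcal{O}_{X^+}(m(K_{X^+}+D^+))=(\phi^+\circ\sigma^+)^*\mathcal{O}_Y(m(K_Y+D_Y))=(\phi\circ\sigma)^*\phi_*\mathcal{O}_X(m(K_X+D))$$
(using $\phi^+$-ampleness of $m(K_{X^+}+D^+)$ for the first equality and part~(2) for the second) exhibits $\sigma^{+*}\mathcal{O}_{X^+}(m(K_{X^+}+D^+))$ as a subsheaf of $\sigma^*\mathcal{O}_X(m(K_X+D))$. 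Lemma~\ref{lem:eff} then yields the effectiveness of $\sigma^*(K_X+D)-\sigma^{+*}(K_{X^+}+D^+)$, so the discrepancies $a_i^+$ and $a_i$ of $(X^+,D^+)$ and $(X,D)$ on $V$ satisfy $a_i^+\geq a_i>-1$. This is the klt condition for $(X^+,D^+)$, completing the argument.
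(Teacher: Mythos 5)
Your proposal is correct and takes essentially the same route as the paper: part (2) is Lemma~\ref{lem:key} applied to $lm(K_X+D)$, part (1) combines the log analogue of Corollary~\ref{cor:keyeff} with the discrepancy decomposition underlying the proof of Theorem~\ref{th:QGorDiv}, and part (3) reruns the proof of Theorem~\ref{th:QGorFlip} with $K_X+D$, $K_Y+D_Y$ and $K_{X^+}+(\phi^+_*)^{-1}D_Y$ in place of the canonical divisors. Your observation that klt-ness of $(X^+,(\phi^+_*)^{-1}D_Y)$ already follows from the inequalities $a_i^+\geq a_i>-1$ corresponds exactly to the paper's remark that the last paragraph of that proof (the strict-positivity curve argument, needed only for terminal singularities) can be omitted in the log setting.
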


The proof is very similar to the proofs of Theorems~\ref{th:QGorDiv} and~\ref{th:QGorFlip}. The key of the proof of (1) is that, as in Corollary~\ref{cor:keyeff}, the divisor $K_X+D-\phi^*(K_Y+D_Y)$ is effective. Lemma~\ref{lem:key}, applied to $m(K_X+D)$ (and $(m(K_Y+D_Y)$) directly gives (2). And to prove (3), we do the same proof as in Theorem~\ref{th:QGorFlip}, by replacing $K_X$ by $K_X+D$, $K_Y$ by $K_Y+D_Y$ and $K_{X^+}$ by $K_{X^+}+(\phi^+_*)^{-1}D_Y$ (excepting the last paragraph, which is not necessary).

\section{Open questions}\label{section6}

The same questions as in $\Qbb$-factorial MMP can be done.\\

\noindent{\bf Question~1.} Do  flips always exist? Or equivalently, in the case where $Y$ is not $\Qbb$-Gorenstein, is $\mathcal{A}:=\bigoplus_{l\geq 0}\phi_*\mathcal{O}_X(lmK_X)=\bigoplus_{l\geq 0}\mathcal{O}_Y(lmK_Y)$ finitely generated as sheaf of $\mathcal{O}_Y$-algebras?\\

\noindent{\bf Question~2.} Are sequences of flips always finite?\\

We can answer positively these two questions in the case of spherical varieties.\\

Let $G$ be a connected reductive algebraic group (over $\Cbb$). A normal $G$-variety is spherical if there exists an open orbit in $X$ under the action of a Borel subgroup of $G$.

Let $H$ be a spherical subgroup of $G$ (ie such that there exists a Borel subgroup of $G$ satisfying that $BH$ is open in $G$). A $G/H$-embedding is a normal $G$-variety containing an open $G$-orbit isomorphic to $G/H$. (A $G/H$-embedding is a spherical $G$-variety, and inversely a spherical $G$-variety is a $G/H$-embedding for some spherical subgroup $H$ of $G$.)

\begin{prop} \label{prop:spher} 
\begin{enumerate}
\item (\cite[Lemme 4.3]{brionmori}) Let $X$ and $Y$ be two spherical $G$-varieties and let $\phi$ be a proper $G$-equivariant morphism. Then for any Cartier divisor $D$ of $X$, the $\mathcal{O}_Y$-algebra $\bigoplus_{l\geq 0}\phi_*\mathcal{O}_X(lD)$ is finitely generated.
\item Let $X$ be a $G/H$-embeddings (ie a normal $G$-variety containing an open $G$-orbit isomorphic to $G/H$). There are only finitely many varieties $Z$ that can be obtained from $X$ by flips (and these varieties are still $G/H$-embeddings). Moreover, there exists a commun desingularization for all these varieties, ie there exist a smooth $G/H$-embedding $V$ and birational proper $G$-equivariant morphisms $\sigma_Z:\,V\longrightarrow Z$, for any $Z$.
\end{enumerate}
\end{prop}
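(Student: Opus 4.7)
The plan is to dispatch part (1) by directly citing \cite[Lemme 4.3]{brionmori}, and to prove part (2) by combining the Luna--Vust combinatorial classification of $G/H$-embeddings by colored fans with the observation that flips are small contractions and can therefore only shrink the combinatorial data.

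First I would check that every $Z$ obtained from $X$ by a sequence of flips is again a $G/H$-embedding. In the spherical setting an extremal ray of $NE(X_i)_{K_{X_i}<0}$ is automatically $G$-stable (it corresponds to a combinatorial wall in the colored fan), so the contraction $\phi_i\colon X_i\lra Y_i$ and its flip $\phi_i^+\colon X_{i+1}\lra Y_i$ are $G$-equivariant morphisms of normal $G$-varieties sharing the open orbit $G/H$.

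Next I would analyze how the colored fan $\mathbb{F}_Z$ varies along the flipping process. Because $\phi_i^+$ is small by the very definition of a flip, the $G$-stable prime divisors of $X_{i+1}$ coincide with those of $Y_i$, which are the $G$-stable prime divisors of $X_i$ minus the ones contracted by $\phi_i$. Translated via Luna--Vust, the set of rays of $\mathbb{F}_Z$ is a subset of the finite set of rays of $\mathbb{F}_X$, and the set of colors is an intrinsic invariant of $G/H$. Since only finitely many colored fans can be built on rays and colors drawn from a fixed finite set (each maximal colored cone is spanned by some subset), the collection of possible $Z$ is finite.

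For the common desingularization I would build a smooth colored fan $\mathbb{F}_V$ simultaneously refining all of the finitely many $\mathbb{F}_Z$: intersect their maximal colored cones to obtain a common refinement, then subdivide it to a smooth colored fan, invoking equivariant resolution of singularities for spherical varieties. The associated $G/H$-embedding $V$ is smooth and comes with proper birational $G$-equivariant morphisms $\sigma_Z\colon V\lra Z$ for every $Z$. The main obstacle will be handling the colored part of the combinatorics precisely: one must verify that a flip cannot create a new color and that a simultaneous smooth refinement of finitely many colored fans exists in the colored sense, not merely on the level of ordinary fans. Both points are essentially standard consequences of the Luna--Vust machinery, but they need to be verified carefully, especially when the initial contraction $\phi$ contracts a (non-$\Qbb$-Cartier) Weil divisor corresponding to a color-ray of the fan.
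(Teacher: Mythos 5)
Your proposal is correct and follows essentially the same route as the paper: part (1) is dispatched by citing Brion's Lemme 4.3, and part (2) by the Luna--Vust classification of $G/H$-embeddings by colored fans combined with the fact that a flip adds no divisor, which is exactly (and all) that the paper says. Your write-up in fact supplies more detail than the paper's two-sentence justification (the $G$-stability of extremal rays, the bookkeeping of $G$-stable divisors through small contractions, and the common smooth toroidal refinement giving the simultaneous desingularization), and the subtleties you flag about colors are real but standard and do not affect the finiteness argument, since all colored cones are generated from a fixed finite set of valuations and colors.
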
 

The second part of the proposition is a consequence of the classification of $G/H$-embeddings in terms of colored fans and the fact that a flip adds no divisor.\\

Hence, in the family of spherical varieties, we immediately get the existence of flips, and the finiteness of sequences of flips is a consequence of {\it 2} of Proposition~\ref{prop:spher} and Corollary~\ref{cor:QGorFlip}.\\

\bibliographystyle{amsalpha}

\providecommand{\bysame}{\leavevmode\hbox to3em{\hrulefill}\thinspace}
\providecommand{\MR}{\relax\ifhmode\unskip\space\fi MR }
\providecommand{\MRhref}[2]{%
  \href{http://www.ams.org/mathscinet-getitem?mr=#1}{#2}
}
\providecommand{\href}[2]{#2}

\end{document}